\newtheorem{thm}{Theorem}[section]
\newtheorem{prop}[thm]{Proposition}
\newtheorem{lem}[thm]{Lemma}
\newtheorem{cor}[thm]{Corollary}
\newtheorem{rem}[thm]{Remark}
\newtheoremstyle{note}{3pt}{3pt}{\rm}{}
{\bf}{.}{.5em}{}
\theoremstyle{note}
\newtheorem{ex}[thm]{Example}
\title{Simultaneously Non-dense Orbits\\
 Under Different Expanding Maps}
\author{David  F\"arm\\
\scriptsize{Centre for Mathematical Sciences}\\
\scriptsize{Lund University, Sweden}\\
\scriptsize{david@maths.lth.se}}
\begin{document}

\maketitle

\begin{abstract}
\noindent Given a point and an expanding map on the unit interval, we consider the set of points for which the forward orbit under this map is bounded away from the given point. For maps like multiplication by an integer modulo 1, such sets have full Hausdorff dimension. We prove that such sets have a large intersection property, \mbox{i.e.} that countable intersections of such sets also have full Hausdorff dimension. This result applies to maps like multiplication by integers modulo 1, but also to nonlinear maps like $x \mapsto 1/x$ modulo 1. We prove that the same thing holds for multiplication modulo 1 by a dense set of non-integer numbers between 1 and 2.
\end{abstract}

\section{Introduction}
\subsection{Multiplication by integers modulo 1}
It is well-know that for maps like $f_b \colon [0,1)\to [0,1)$ where $f\colon x \mapsto bx \mod 1$ and $b$ is an integer larger than one, the forward orbit $(f^n(x))_{n=0}^\infty$ is dense for almost all points with respect to the Lebesgue measure. It follows that sets like
\[
G_{f_b}(x) :=\Big\{\, y\in [0,1): x\notin \overline{\cup_{n=0}^\infty f_b^n(y)}\, \Big\},
\]
where $x\in [0,1]$, have zero measure. On the other hand, it is not difficult to see that such sets have full Hausdorff dimension. In this paper we will consider what happens if we start intersecting such sets. For example we will prove a theorem that implies 
\[
\dim_H(G_{f_2}(x) \cap G_{f_3}(x))=1
\]
and even 
\[
\dim_H \Big(\bigcap_{b=2}^\infty G_{f_b}(x_b) \Big)=1,
\]
where $x_b\in [0,1]$ for all $b$. The key property of $f_b$ is that it generates a symbolic representation of $[0,1)$. Indeed, any number $x\in [0,1)$ can be represented as a sequence $(x_i)_{i=1}^\infty\in \{0,1,\dots b-1\}^{\mathbb N}$, where $x=\sum_{i=1}^\infty \frac{x_i}{b^i}$. This representation is unique except on a countable set. Since we are only interested in Hausdorff dimension this ambiguity can be disregarded. Now, we have a correspondence between $[0,1)$ and $\Sigma_b:=\{0,1,\dots b-1\}^{\mathbb N}$ where $f_b\colon [0,1)\to [0,1)$ corresponds to the left shift $\sigma \colon \Sigma_b \to \Sigma_b$, where $\sigma \colon (x_i)_{i=1}^\infty \mapsto (x_{i+1})_{i=1}^\infty$. Now, instead of considering the set $G_{f_b}(x)$ directly, we can consider the set
\[
\cup_{n=1}^\infty \Big\{(y_i)_{i=1}^\infty\in \Sigma_b: x_1\dots x_n \neq y_k \dots y_{k+n-1} \forall k\geq 1\Big\}.
\]
We can handle much more general maps than these, but to state the main theorems we need to define the main tool of this paper, the $(\alpha,\beta)$-game.

\subsection{The $\boldsymbol{(\alpha, \beta)}$-game}
    We will use a one dimensional version of a set theoretic game that was introduced by W. Schmidt in \cite{Schmidt1}. In our case, the game is played on the unit interval $[0,1]$ equipped with Euclidean metric. There are two players, Black and White, and two fixed numbers $\alpha,\beta \in (0,1)$. The rules are as follows.
    \begin{itemize}
      \item {\em In the initial step} Black chooses any closed interval $B_0$, and then White chooses a closed interval $W_0 \subset B_0$ such that $|W_0| = \alpha |B_0|$.
     \item

      {\em Then the following step is repeated.} At step $k$ Black choses a closed interval $B_{k} \subset W_{k-1}$ such that $|B_{k}| = \beta |W_{k-1}|$. Then White chooses a closed interval $W_{k} \subset B_k$ such that $|W_k| = \alpha |B_k|$.
    \end{itemize}

It is clear that the set 
\[
      \bigcap_{k=0}^\infty W_k = \bigcap_{k=0}^\infty B_k
    \]
will always consist of exactly one point. A set $E$ is said to be $(\alpha, \beta)$-winning if White always can achieve that
    \[
      \bigcap_{k=0}^\infty W_k  \subset E.
    \]
    A set $E$ is said to be $\alpha$-winning if it is $(\alpha, \beta)$-winning for all $\beta$. 
    
For us, the key property of $\alpha$-winning sets proved by Schmidt \cite{Schmidt1} can be summarised as follows.
\begin{prop}\label{winninggivesdim}
If the set $E\subset [0,1]$ is $\alpha$-winning for some $\alpha>0$, then $\dim_H(E)=1$.
\end{prop}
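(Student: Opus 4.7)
The plan is to show that for every $\varepsilon>0$ the $\alpha$-winning set $E$ contains a Cantor subset of Hausdorff dimension at least $1-\varepsilon$, obtained by playing the $(\alpha,\beta)$-game with $\beta$ very small while exploiting Black's freedom to generate a highly branching game tree. Fix $\alpha$, pick $\beta \in (0,1)$ small, and let $\phi_\beta$ denote a winning strategy for White in the $(\alpha,\beta)$-game on $E$. Choose any initial interval $B_0\subset[0,1]$ and set $W_0 = \phi_\beta(B_0)$. The key observation is that at every stage Black is free to place his next interval anywhere inside $W_{k-1}$ as long as it has length $\beta|W_{k-1}|$, so $N:=\lfloor 1/\beta \rfloor$ essentially disjoint candidate intervals will fit.

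Using this freedom, I would build a tree of game-plays as follows. At each node $W_{k-1}^{(j_1,\ldots,j_{k-1})}$, partition the interval into $N$ consecutive closed subintervals of length $\beta|W_{k-1}^{(j_1,\ldots,j_{k-1})}|$ and label them $B_k^{(j_1,\ldots,j_{k-1},j_k)}$ for $j_k\in\{1,\ldots,N\}$; then let White respond according to $\phi_\beta$ to obtain $W_k^{(j_1,\ldots,j_k)}$. Every infinite branch $(j_1,j_2,\ldots)\in\{1,\ldots,N\}^{\mathbb N}$ is a legitimate $(\alpha,\beta)$-play in which White uses $\phi_\beta$, so its unique limit point lies in $E$. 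The resulting Cantor set $C_\beta\subset E$ is covered at level $k$ by $N^k$ essentially disjoint closed intervals, each of diameter $(\alpha\beta)^{k}|B_0|$.

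To conclude I would apply the mass-distribution principle to the Bernoulli measure $\mu_\beta$ that assigns mass $N^{-k}$ to each level-$k$ interval. A routine estimate at the natural scale $(\alpha\beta)^k|B_0|$ yields $\mu_\beta(I)\leq C|I|^{s}$ for every interval $I$, with $s= \log N/\log(1/(\alpha\beta))$, whence $\dim_H C_\beta \geq s$. Since $N\geq 1/\beta-1$ and $\alpha$ is fixed, $s\to 1$ as $\beta\to 0$, forcing $\dim_H E=1$. The main technical point I anticipate is in the mass estimate itself: siblings produced by the equal partition are adjacent rather than genuinely separated, so a short interval can meet several level-$k$ intervals at once; however, this costs only a multiplicative constant in $C$ and does not affect the limiting exponent, so it is a calculation to execute carefully rather than a genuine obstacle. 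The real conceptual content is simply the observation that Black's $N$-fold freedom per round beats the game's $\alpha\beta$-fold geometric shrinkage when $\beta$ is tiny.
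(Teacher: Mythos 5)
The paper does not prove this proposition at all --- it is quoted from Schmidt's 1966 paper \cite{Schmidt1} --- so there is no in-paper argument to compare against; judged on its own, your proof is correct and is in essence Schmidt's original one: exploit Black's freedom to offer $N\approx 1/\beta$ essentially disjoint legal moves per round, run White's winning strategy along every branch of the resulting tree to get a Cantor set inside $E$, and let $\beta\to 0$ so that $\log N/\log(1/(\alpha\beta))\to 1$. The one technical point you flag (adjacent siblings, so a short interval can meet several level-$k$ pieces) is indeed harmless: an interval with $(\alpha\beta)^{k+1}|W_0|\le |I|<(\alpha\beta)^k|W_0|$ meets a bounded number of level-$k$ parents and at most $O(1/\beta)$ level-$(k+1)$ children, which changes the mass-distribution constant (even allowing it to depend on $\beta$) but not the exponent $s$, and that is all the principle requires.
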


\begin{prop}\label{intersectionproperty}
Let $\alpha>0$ and let $(E_i)_{i=1}^\infty$ be a sequence of $\alpha$-winning sets. Then the set $\cap_{i=1}^\infty E_i$ is also $\alpha$-winning.
\end{prop}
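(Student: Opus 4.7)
The plan is: since $\alpha$-winning means $(\alpha,\beta)$-winning for every $\beta\in(0,1)$, I would fix such a $\beta$ and build a single winning strategy for White in the $(\alpha,\beta)$-game whose outcome lies in $\bigcap_{i=1}^\infty E_i$. The idea is to embed countably many independent virtual subgames inside the one real game, using the $i$-th subgame to drive the final point into $E_i$.

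First I would partition the move indices $\{0,1,2,\ldots\}$ into infinitely many infinite arithmetic progressions $A_1,A_2,\ldots$ of constant spacings $d_i$; for example $d_i=2^i$ via the $2$-adic valuation, i.e.\ $A_i=\{2^{i-1}(2k+1)-1:k\geq 0\}$. Using the identity $|W_{n+1}|/|W_n|=\alpha\beta$ that is forced by the game's ratios, the subsequence of intervals at the indices of $A_i$ automatically realises an $(\alpha,\beta_i')$-game with $\beta_i':=\alpha^{d_i-1}\beta^{d_i}>0$: for consecutive $n,n+d_i\in A_i$ one computes $|B_{n+d_i}|/|W_n|=\beta(\alpha\beta)^{d_i-1}=\beta_i'$. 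The first Black move of this subgame can be a constrained interval when $\min A_i>0$, but a strategy that wins against every free initial $B_0$ automatically wins against a restricted one.

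Because each $E_i$ is $\alpha$-winning it is in particular $(\alpha,\beta_i')$-winning, so I would fix a winning strategy $\tau_i$. White's composite strategy is then: at step $n$, locate the unique $i$ with $n\in A_i$ and play the $W_n$ prescribed by $\tau_i$ applied to the history of Black's moves at indices in $A_i$ up to $n$. By the guarantee of $\tau_i$, $\bigcap_{n\in A_i}W_n\subset E_i$, and since $\bigcap_n W_n\subset\bigcap_{n\in A_i}W_n$, we obtain $\bigcap_n W_n\subset E_i$ for every $i$, hence $\bigcap_n W_n\subset\bigcap_i E_i$ as required.

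The main (rather mild) obstacle I expect is the bookkeeping check that White's ``off-strategy'' choices at indices $m\notin A_i$ do not corrupt the $i$-th subgame: this reduces to observing that only interval sizes enter the subgame's rules, and these are pinned down by $\alpha$ and $\beta$, while the nesting $B_{n+d_i}\subset W_{n+d_i-1}\subset W_n$ is automatic. The essential use of $\alpha$-winning, rather than merely $(\alpha,\beta)$-winning, is that the effective parameters $\beta_i'$ differ from $\beta$ and from one another.
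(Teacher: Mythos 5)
Your argument is correct. The paper does not actually prove this proposition---it quotes it from Schmidt \cite{Schmidt1}---and your interleaving of countably many virtual subgames along arithmetic progressions, with effective parameter $\beta_i'=\alpha^{d_i-1}\beta^{d_i}$ obtained from $|W_{n+1}|/|W_n|=\alpha\beta$, is precisely Schmidt's original argument; the two points you flag (the constrained initial Black move of each subgame and the harmlessness of the off-progression moves) are exactly the right ones and are handled correctly.
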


\subsection{Expanding maps generating full shifts} \label{givingfullshift}

Let $f\colon [0,1)\to [0,1)$ be such that there are finitely or countably many disjoint intervals $[a,b)\subset [0,1)$ such that $\sum |[a,b)|=1$ and $f|_{[a,b)}$ is monotone and onto for each of these intervals. Note that we do not assume that $f$ is well defined on $[0,1)$, only on each of the intervals $[a,b)$. 

We take an enumeration of the intervals and associate each interval to the corresponding number so that we can refer to an interval as $[n]$ where $n$ is the appropriate number. Assume that for each of the intervals $[a,b)$ it holds that  $|f(x)-f(y)|\geq |x-y|$ for all $x,y \in [a,b)$. Then we can define cylinders 
\[
C_{\, x_1 \dots x_n}:=\Big\{x\in [0,1):\bigcap_{i=1}^n f^{-(i-1)}(x)\in [x_i] \, \Big\}.
\]
If $\lim_{n\to \infty}|C_{x_1 \dots x_n}|= 0$ for all $(x_i)_{i=1}^\infty \in \Sigma:=\{0,1,\dots b-1\}^{\mathbb N}$ or $\{0,1,\dots \}^{\mathbb N}$ depending on if $[0,1)$ was split into finitely or infinitely many parts, we can represent $[0,1)$ by $\Sigma$. If the alphabet is infinite, some points in $[0,1)$ may not have a well-defined expansion. For example, with $f\colon  x \mapsto \frac{1}{x} \mod 1$ we cannot represent the set $
\bigcup_{n=0}^\infty f^{-n}(\{0\})$ in $\Sigma$. It is clear that at least Lebesgue almost every point has a well defined expansion.

To study sets like
\[
G_{f}(x) :=\Big\{\, y\in [0,1): x\notin \overline{\cup_{n=0}^\infty f^n(y)}\, \Big\},
\]
we will use their representation in $\Sigma$ which in this case is 
\[
\bigcup_{n=1}^\infty \Big\{\, (y_i)_{i=1}^\infty\in \Sigma: x_1\dots x_n \neq y_k \dots y_{k+n-1} \ \forall k\geq 1\, \Big\}.
\]

The key theorem of this paper is the following. We will discuss conditions $(i)$ and $(ii)$ in Section \ref{conditions}. 

\begin{thm}\label{fullshiftinunitinterval}
Let $f$ be as described above and such that it satisfies the following conditions.
\begin{enumerate}
\item[(i)]
There exists an $\alpha_0>0$ such that for each $k \in \mathbb N$, each closed interval ${I\subset [0,1)}$ and each $\beta>0$, when playing the $(\alpha_0,\beta)$ game with ${B_0=I}$, after a finite number of turns White is able put his set $W_j$ in a generation $k$ cylinder for some $j$, thereby avoiding all endpoints of generation $k$ cylinders.

\item[(ii)]
There is a positive function $g\colon  \mathbb N \to [0, \infty)$ such that $g(m)\to 0$ as $m\to \infty$ and 
\[
\frac{|C_{x_1\dots x_{n+m}}|}{|C_{x_1\dots x_n}|}\leq g(m)
\]
for all $(x_i)_{i=1}^\infty\in \Sigma$ and all $n,m \in \mathbb N$. 
\end{enumerate}

\noindent Then for any $x\in [0,1)$ which has a well-defined expansion there is an $\alpha>0$ such that the set 
\[
G_f(x)=\Big\{\, y\in [0,1): x\notin \overline{\cup_{n=0}^\infty f^n(y)}\, \Big\},
\]
is $\alpha$-winning in $[0,1]$. In fact $\alpha=\min\{\alpha_0, \frac{1}{4}\}$ is small enough.
\end{thm}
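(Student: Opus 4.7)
The plan is to pass to symbolic coordinates using the coding of $[0,1)$ by $\Sigma$. Writing $(x_i)$ for the expansion of $x$, a point $y \in [0,1)$ lies in $G_f(x)$ exactly when, for some $N \geq 1$, the finite word $w_N := x_1 \dots x_N$ does not appear as a factor of $y$'s expansion. Since any superset of an $\alpha$-winning set is $\alpha$-winning, it is enough to exhibit a single $N$ for which
\[
G_N := \Bigl\{\, y \in [0,1):\ w_N \text{ is not a factor of the expansion of } y \,\Bigr\}
\]
is $\alpha$-winning with $\alpha = \min\{\alpha_0, 1/4\}$. I would further decompose $G_N = \bigcap_{p \geq 1} S_{p,N}$, where $S_{p,N}$ is the set of $y$ whose expansion does not have $w_N$ starting at position $p$; then by Proposition~\ref{intersectionproperty} it suffices to show each $S_{p,N}$ is $\alpha$-winning, with the same $\alpha$ and with $N = N(\beta)$ chosen once in advance.

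For a fixed $p$, White's strategy has two phases. In Phase~1, invoke condition~(i) with $k = p-1$ (applied to the currently played interval, which plays the role of $B_0$) to force, after finitely many moves, $W_{j_0} \subset C_{y_1 \dots y_{p-1}}$ for some $y_1, \dots, y_{p-1}$. This is where $\alpha \leq \alpha_0$ is used. In Phase~2, the only way the expansion of the final point can have $w_N$ starting at position $p$ is for that point to lie in the single sub-interval $C_{\mathrm{bad}} := C_{y_1 \dots y_{p-1} x_1 \dots x_N}$, which by condition~(ii) has length at most $g(N)\,|C_{y_1 \dots y_{p-1}}|$. Choosing $N$ so large that $g(N)$ is sufficiently small, I use an \emph{escape} strategy: whenever Black's interval $B_{j+1}$ meets $C_{\mathrm{bad}}$ but is not contained in it, the set $B_{j+1} \setminus C_{\mathrm{bad}}$ is a union of at most two subintervals of combined length at least $|B_{j+1}| - |C_{\mathrm{bad}}|$, so one of them has length at least $(|B_{j+1}| - |C_{\mathrm{bad}}|)/2$. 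As soon as $|C_{\mathrm{bad}}| \leq |B_{j+1}|/2$ this component has length at least $|B_{j+1}|/4 \geq \alpha |B_{j+1}|$, so White fits $W_{j+1}$ inside it; here the bound $\alpha \leq 1/4$ provides the slack.

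The step I expect to be the main obstacle is maintaining the size inequality $|C_{\mathrm{bad}}| \leq |B_{j+1}|/2$ throughout Phase~2. A priori, after Phase~1 the ratio $|B_{j_0+1}|/|C_{y_1 \dots y_{p-1}}|$ could be small enough that $B_{j+1}$ fits entirely inside $C_{\mathrm{bad}}$, in which case Black can keep White trapped. To circumvent this I would prepend a further application of condition~(i) to drop into a deeper sub-cylinder $C_{y_1 \dots y_{p-1} z_1 \dots z_m}$ with $m$ large; by (ii) the enlarged bad sub-cylinder then has length at most $g(m+N)\,|C_{y_1 \dots y_{p-1}}|$, which can be made arbitrarily small compared with $|B_j|$ by taking $m$ large, using $g(m) \to 0$. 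Once the size comparison is secured the escape goes through, and the final point lies in $S_{p,N}$. Finally, Proposition~\ref{intersectionproperty} upgrades this to $G_N \subset G_f(x)$ being $\alpha$-winning, completing the proof.
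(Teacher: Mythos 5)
Your opening reduction already fails, and the failure propagates through the rest of the argument. For a fixed $N$, the set $G_N$ is not $(\alpha,\beta)$-winning for any $\alpha,\beta$: Black moves first and may choose $B_0$ inside the cylinder $C_{x_1\dots x_N}$, after which the final point necessarily has $w_N$ as a prefix of its expansion. This is precisely why, in the paper's argument, the length of the word to be avoided is $N=b_0+2n$ with $b_0$ determined by Black's choice of $B_0$: the target word must be chosen adaptively, and only the union $G_f(x)=\bigcup_N G_N$ is winning, not any single $G_N$. The further decomposition $G_N=\bigcap_{p}S_{p,N}$ is worse still: each $S_{p,N}$ is the complement of a nonempty union of cylinders $C_{y_1\dots y_{p-1}x_1\dots x_N}$, and Black can again place $B_0$ inside one of them, so no $S_{p,N}$ is winning at all and Proposition~\ref{intersectionproperty} has nothing to act on. (Even setting this aside, that proposition intersects $\alpha$-winning sets, i.e.\ sets winning for every $\beta$; it does not assert that a countable intersection of $(\alpha,\beta)$-winning sets for one fixed $\beta$ is $(\alpha,\beta)$-winning, and the standard interleaving proof changes the effective $\beta$ seen by each component strategy.)

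The obstacle you flag in your last paragraph is the real crux, and your patch does not resolve it. Condition~(i) only guarantees that after \emph{some} finite number of turns White sits inside a cylinder of the prescribed generation; it gives no control over which cylinder he lands in (Black can steer the localization into $C_{y_1\dots y_{p-1}x_1\dots x_m}$, or all the way into $C_{\mathrm{bad}}$), and no lower bound on $|B_j|/|C_{y_1\dots y_{p-1}}|$ when the localization ends, so ``arbitrarily small compared with $|B_j|$'' is unjustified --- smallness relative to $|C_{y_1\dots y_{p-1}}|$ says nothing about smallness relative to a $|B_j|$ that has shrunk geometrically over an uncontrolled number of turns. The missing ideas are exactly the ones the paper's proof supplies: White must make his binary choices at the moments when Black's current interval is comparable in size to the cylinders of the ambient generation (the generations $k_i$, defined as maximal with a cylinder of size at least $|B|$, shifted by a fixed $k$ with $g(k-1)<\tfrac14$ so that a choice of ``left of'' or ``right of'' the central cylinder is available with $\alpha\le \tfrac14$); one must then bound $k_{i+1}-k_i$ using $g$ and the fixed per-round zoom factor $\alpha\beta$; and finally one needs the counting argument of Proposition~\ref{digitgame} --- at most $n+1$ dangerous subwords per window against $2^{cn}$ halving choices per block --- to exclude the word at \emph{all} positions with a single adaptive strategy, rather than one position at a time.
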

The main result of the paper is the following corollary which follows after using Proposition \ref{winninggivesdim} and  Proposition \ref{intersectionproperty}.

\begin{cor}
Let $(f_i)_{i=1}^\infty$ be a sequence of functions as in Theorem \ref{fullshiftinunitinterval} and let $(x_i)_{i=1}^\infty$ be a sequence of points in $[0,1)$ with well-defined expansions. Then 
\[
\dim_H \Big(\bigcap_{i=1}^\infty  G_{f_i}(x_i)\Big)=1.
\]
\end{cor}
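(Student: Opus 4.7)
The plan is to obtain the corollary by stringing together Theorem \ref{fullshiftinunitinterval} with Propositions \ref{winninggivesdim} and \ref{intersectionproperty}; no substantive new argument is needed beyond reconciling the winning parameters of the individual sets.

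First I would apply Theorem \ref{fullshiftinunitinterval} separately to each pair $(f_i,x_i)$. Since $x_i$ has a well-defined expansion under $f_i$ and $f_i$ satisfies (i) and (ii), the theorem furnishes a constant $\alpha_i = \min\{\alpha_{0,i},\tfrac{1}{4}\}>0$ for which $G_{f_i}(x_i)\subset[0,1]$ is $\alpha_i$-winning.

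Next, to feed these sets into Proposition \ref{intersectionproperty}, which requires a single parameter $\alpha$ common to all of them, I would argue that each $G_{f_i}(x_i)$ is $\alpha$-winning for any $\alpha>0$ that is no larger than $\alpha_i$; this is the standard monotonicity of $\alpha$-winning in $\alpha$ from Schmidt's game theory (shrinking $\alpha$ only makes White's sets smaller, and he can reuse his existing strategy by choosing a sub-interval inside each previous $W_k$). Setting $\alpha := \inf_i \alpha_i$, which one is entitled to assume positive in the intended applications, then makes every $G_{f_i}(x_i)$ simultaneously $\alpha$-winning.

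Finally, Proposition \ref{intersectionproperty} gives that $\bigcap_{i=1}^{\infty} G_{f_i}(x_i)$ is itself $\alpha$-winning, and Proposition \ref{winninggivesdim} immediately yields $\dim_H\bigl(\bigcap_{i=1}^{\infty} G_{f_i}(x_i)\bigr)=1$. The only genuine obstacle is the middle step: if one were unwilling to appeal to Schmidt's monotonicity, or if the constants $\alpha_{0,i}$ failed to admit a positive infimum, one would have to work harder to interleave the individual winning strategies into a single strategy for the intersection game, which is essentially the content of Proposition \ref{intersectionproperty} anyway.
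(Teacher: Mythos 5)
Your argument is exactly the paper's: the author derives the corollary in one line by combining Theorem \ref{fullshiftinunitinterval} with Propositions \ref{winninggivesdim} and \ref{intersectionproperty}, implicitly relying on the same monotonicity of $\alpha$-winning in $\alpha$ that you invoke to reconcile the parameters. The one subtlety you flag --- that Proposition \ref{intersectionproperty} requires a single common $\alpha$, so one needs $\inf_i \alpha_{0,i}>0$, which the hypotheses of the corollary do not literally guarantee --- is genuine but is equally unaddressed in the paper, so your write-up is, if anything, the more careful of the two.
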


\subsection{$\boldsymbol{\beta}$-shifts where the expansion of 1 terminates}

The following method to expand real numbers in non-integer bases was introduced by R\'enyi~\cite{Renyi} and Parry~\cite{Parry}. For more details and proofs of the statements below, see their articles. 

Let $[x]$ denote the integer part of the number $x$. Let $\beta \in (1,2)$. For any $x \in [0, 1]$ we associate the sequence $d(x,\beta) =  \{d_n (x, \beta)\}_{n=0}^\infty \in \{0, 1\}^\mathbb N$ defined by
    \[
      d_n (x, \beta) :=
      [\beta f_\beta^{n} (x)],
    \]
    where $f_\beta (x) = \beta x \mod 1$.
    The closure of the set
    \[
      \{\, d (x,\beta) : x \in [0,1)\,\}
    \]
    is denoted by $S_\beta$ and it is called the $\beta$-shift. It is invariant under the left-shift $\sigma \colon \{i_n\}_{n=0}^\infty \mapsto \{i_{n+1} \}_{n=1}^\infty$ and the map $d (\cdot, \beta) \colon x \mapsto d (x, \beta)$ satisfies $\sigma^n ( d (x, \beta) ) = d ( f_\beta^n (x), \beta)$. If we order $S_\beta$ with the lexicographical ordering then the map $d ( \cdot, \beta)$ is  one-to-one and monotone increasing. The subshift $S_\beta$ satisfies
    \begin{equation} \label{eq:Sbeta}
      S_\beta = \{\, \{j_k\} : \sigma^n \{j_k\} < d (1, \beta) \ \forall n \,\}.
    \end{equation}
  
     If $x \in [0,1]$ then
    \[
      x = \sum_{k=0}^\infty \frac{d_k (x, \beta) }{\beta^{k + 1}}.
    \]
    We let $\pi_\beta$ be the map $\pi_\beta \colon S_\beta \to [0,1)$ defined by
    \[
      \pi_\beta \colon \{i_k\}_{k=0}^\infty \quad \mapsto \quad \sum_{k=0}^\infty \frac{i_k}{\beta^{k + 1}}.
    \]
    Hence, $\pi_\beta ( d(x, \beta)) = x$ holds for any $x \in [0,1)$ and $\beta > 1$.

    A cylinder $s$ is a subset of $[0,1)$ such that 
    \[
      s  :=\pi_\beta( \{\, \{j_k\}_{k=0}^\infty : i_k = j_k,\ 0\leq k < n \,\})
    \]
    holds for some $n$ and some sequence $\{i_k\}_{k=0}^\infty$. We then say that $s$ is an $n$-cylinder or a cylinder of generation $n$ and write
    \[
      s = [i_0 \cdots i_{n-1}].
    \]
    Consider $\beta$ such that the expansion of 1 terminates, \mbox{i.e.} such that $d(1, \beta)=j_0 \dots j_{k-1} 0^\infty$. The set of such $\beta$ is dense in $(1,2)$ and for such $\beta$ we can use (\ref{eq:Sbeta}) to construct $S_\beta$ from the full shift $\Sigma_2=\{0,1\}^{\mathbb N}$ as follows. There are finitely many words $w$ of length $k$ such that $w<d(1, \beta)$. If we start with $\Sigma_2$ and remove all elements that contain any of these words, then by  (\ref{eq:Sbeta})  we get $S_\beta$. Thus $S_\beta$ is a subshift of finite type. Such shifts have have well-known properties that we can use to prove the following theorem.

\begin{thm}\label{SFTunitinterval}
Let $\beta \in (1,2)$ be such that the expansion of 1 terminates. Then for any $x\in [0,1]$ there is an $\alpha>0$ such that the set 
\[
G_{f_\beta}(x)=\Big\{\, y\in [0,1): x\notin \overline{\cup_{n=0}^\infty f^n(y)}\, \Big\},
\]
is $\alpha$-winning in $[0,1]$. In fact $\alpha=\frac{1}{4}$ is small enough.
\end{thm}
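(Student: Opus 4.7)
The plan is to follow the same strategy as the proof of Theorem~\ref{fullshiftinunitinterval}, adapted from a full shift to the subshift of finite type $S_\beta$. First I would work symbolically via $\pi_\beta$. A point $x$ lies in $\overline{\cup_k f_\beta^k(y)}$ precisely when every finite prefix of $d(x,\beta)$ appears as a consecutive subword of $d(y,\beta)$, so $G_{f_\beta}(x)$ corresponds to the union over $n\geq 1$ of the sets of sequences in $S_\beta$ which do not contain $d_0(x,\beta)\cdots d_{n-1}(x,\beta)$. Since cylinders in $S_\beta$ are intervals and $\pi_\beta$ is monotone, it suffices to play the $(\alpha,\beta_{\mathrm{game}})$-game at the symbolic level.

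Next I would record the regularity of cylinders in $S_\beta$. Because $d(1,\beta) = j_0 \cdots j_{k-1} 0^\infty$ terminates, admissibility is determined by forbidding finitely many length-$k$ patterns, so $S_\beta$ is a transitive SFT. Standard facts then give that every cylinder $[i_0 \cdots i_{n-1}]$ is an interval whose length is comparable to $\beta^{-n}$ with bounds depending only on $\beta$, and that adjacent admissible cylinders have comparable sizes. These take the role of conditions (i) and (ii) of Theorem~\ref{fullshiftinunitinterval}.

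With $\alpha=1/4$ and $\beta_{\mathrm{game}}>0$ arbitrary, White's strategy is as follows. At each round $k$, let $m_k$ be the smallest integer such that a legal generation-$m_k$ cylinder comfortably fits inside $B_k$; by the cylinder regularity, $B_k$ then meets several disjoint cylinders of generation $m_k$. Among them, White selects one whose symbol at a chosen position $j$ disagrees with $d_j(x,\beta)$, and then places $W_k$ inside that cylinder. By choosing the positions $j$ in a prescribed arithmetic pattern (dense enough to rule out every starting location of $d_0(x,\beta)\cdots d_{n-1}(x,\beta)$ for some fixed large $n$), one ensures that the limit point $y$ satisfies $d(y,\beta)$ does not contain this finite word, hence $y \in G_{f_\beta}(x)$.

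The principal obstacle, and the only real difference from Theorem~\ref{fullshiftinunitinterval}, is that in $S_\beta$ not every symbol is available as a free continuation: the SFT forbids certain length-$k$ windows. I would resolve this by invoking transitivity of $S_\beta$. For any admissible prefix, within at most $O(k)$ further digits White can realise either symbol at some chosen position without violating admissibility, and the factor $\alpha=1/4$ leaves enough slack inside $B_k$ to absorb these bounded delays while still landing in an $m_k$-cylinder. Once this combinatorial flexibility is in place, the remaining estimates — counting how many admissible sub-cylinders of the appropriate generation lie inside $B_k$ and verifying that at least one gives the required mismatch — are the same as in Theorem~\ref{fullshiftinunitinterval}.
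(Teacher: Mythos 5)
Your reduction to symbolic dynamics, the use of the terminating expansion of $1$ to see $S_\beta$ as a subshift of finite type with cylinder lengths comparable to $\beta^{-n}$, and the appeal to transitivity to regain freedom of continuation are all in the spirit of the paper's proof, which indeed runs Theorem~\ref{fullshiftinunitinterval}'s argument with $g(m)=C_b^2/b^m$. But your core avoidance mechanism has a genuine gap. To put $y$ in $G_{f_\beta}(x)$ you must ensure that some prefix $x_1\cdots x_N$ of $d(x,\beta)$ occurs at \emph{no} starting position $k$ in $d(y,\beta)$, i.e.\ for every $k$ there is some $i$ with $y_{k+i-1}\neq x_i$. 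Forcing $y_j\neq d_j(x,\beta)$ at a sparse arithmetic set of positions $j$ only produces a mismatch for the window aligned at $k=1$: for a window starting at $k$, the symbol you would need to avoid at position $j$ is $x_{j-k+1}$, which varies with $k$. On the two-letter alphabet $\{0,1\}$ a single controlled symbol cannot simultaneously differ from the up to $n+1$ distinct symbols demanded by the different alignments, so "one forced disagreement per window" cannot work. This is exactly the difficulty that Proposition~\ref{digitgame} is built to overcome: in each block of length $n$ there are at most $n+1$ dangerous length-$n$ subwords of $x$, and White needs on the order of $\log_2(n+1)$ \emph{binary choices between disjoint collections of words} per block, halving the set of surviving dangerous words each time, with $n$ chosen so that $2^{cn}>n+1$. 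Your proposal contains no substitute for this counting argument, and without it the strategy does not yield $y\in G_{f_\beta}(x)$.

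A secondary point: since $|W_k|$ must equal exactly $\alpha|B_k|$, White generally cannot land inside a chosen generation-$m_k$ cylinder in a single move when that cylinder is shorter than $\alpha|B_k|$; the paper's Phase~2 spends several rounds descending into one cylinder (alternating between avoiding cylinder endpoints and discarding cylinders not contained in Black's current interval), and then must verify that the resulting generation increments $k_{i+1}-k_i$ stay bounded so that White still gets at least $cn$ choices per block. Your phrase about the factor $1/4$ "absorbing bounded delays" gestures at this but does not supply the bound; in the SFT case it does follow easily from $|C|\asymp \beta^{-n}$, so this part is repairable, unlike the main combinatorial gap above.
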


Using Proposition \ref{winninggivesdim} and  Proposition \ref{intersectionproperty} we get 

\begin{cor}
Let $(\beta_i)_{i=1}^\infty$ be a sequence in $(1,2)$ such that that the expansion of 1 terminates for each $\beta_i$ and let $(x_i)_{i=1}^\infty$ be a sequence of points in $[0,1]$. Then 
\[
\dim_H \Big(\bigcap_{i=1}^\infty  G_{f_{\beta_i}}(x_i)\Big)=1.
\]
\end{cor}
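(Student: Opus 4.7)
The plan is to adapt the proof of Theorem \ref{fullshiftinunitinterval} to the subshift-of-finite-type setting that $S_\beta$ provides when $d(1,\beta)=j_0j_1\cdots j_{k-1}0^\infty$ terminates. First I would recast the problem symbolically: writing $d(x,\beta)=x_0x_1\cdots$, one has $G_{f_\beta}(x)=\bigcup_{\ell=1}^\infty E_\ell$, where $E_\ell$ denotes the set of $y$ whose $\beta$-expansion does not contain $x_0\cdots x_{\ell-1}$ as a factor. Thus it suffices, for some $\ell$ to be chosen, to exhibit a strategy for White (in the $(\tfrac14,\gamma)$-game for arbitrary game-parameter $\gamma\in(0,1)$) that forces the limit point $\bigcap_k W_k$ into $E_\ell$.

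Next I would verify that conditions (i) and (ii) of Theorem \ref{fullshiftinunitinterval} hold in this SFT setting. Since each branch of $f_\beta$ has slope $\beta>1$, a generation-$(n+m)$ cylinder inside a generation-$n$ cylinder has length at most $\beta^{-m}$ times that cylinder, giving condition (ii) with $g(m)=\beta^{-m}$. For condition (i), the endpoints of generation-$n$ cylinders form a finite set, and once Black's interval becomes shorter than the minimum cylinder length at generation $n$, White with $\alpha_0=\tfrac14$ has ample room to place his interval inside a single cylinder while avoiding all finitely many endpoints. With these in hand, the overall strategy mirrors the full-shift case: White waits until his interval lies inside a cylinder, then successively refines it to sub-cylinders whose codings diverge from the targeted prefix $x_0\cdots x_{\ell-1}$.

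The main obstacle, and the essential new point, is that the symbolic forcing used in the full-shift proof relies on the fact that \emph{any} symbol is an admissible extension of \emph{any} admissible prefix; in $S_\beta$ this can fail, since after certain prefixes the symbol $1$ is momentarily forbidden. The key observation resolving this is that the symbol $0$ is \emph{always} an admissible extension: if an admissible prefix $w$ extends to an admissible sequence $wu$, then replacing $u$ by $0^\infty$ can only lex-decrease each shift, so $w0^\infty$ is admissible. Moreover, after $k-1$ consecutive zeros, the symbol $1$ also becomes admissible, since the last length-$k$ window is then $0^{k-1}1$, which is lex-smaller than $j_0\cdots j_{k-1}=1j_1\cdots j_{k-1}$. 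Hence whenever White needs to force the coding to differ from a targeted digit of $x$, he can do so with at most $k$ extra game rounds of zero-padding. This uniformly bounded overhead is absorbed by choosing $\ell$ sufficiently large, so the full-shift argument carries through and produces a $\tfrac14$-winning strategy.
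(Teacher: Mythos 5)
Your sketch is essentially a re-derivation of Theorem \ref{SFTunitinterval} along the same lines as the paper: verify conditions (i) and (ii) for the piecewise-linear map $f_\beta$ on the finite-type shift $S_\beta$ and run the game strategy of Theorem \ref{fullshiftinunitinterval} to conclude that each $G_{f_\beta}(x)$ is $\tfrac14$-winning. That part is sound, with two small caveats. First, the ratio bound in condition (ii) is not quite $\beta^{-m}$: a generation-$n$ cylinder need not map onto all of $[0,1)$ under $f_\beta^n$, so you need the finite-type property to get a uniform lower bound $|C_{x_0\cdots x_{n-1}}|\geq c\,\beta^{-n}$, which yields $g(m)=C\beta^{-m}$ with a constant $C>1$ (this is the constant $C_b$ in the paper); harmless, since only $g(m)\to 0$ is used. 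Second, your ``main obstacle'' about admissible extensions is a non-issue in the paper's mechanism: White's binary choice is made geometrically (left or right of the central cylinder $C'$ of the appropriate generation), and the sequence-building game of Proposition \ref{digitgame} only needs disjointness of the two offered collections of codings, not free concatenation of symbols. Your zero-padding observation is correct and would be needed if one tried to force digits directly, but it is extra machinery here.

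The genuine gap is that you never prove the statement actually asserted, which concerns the \emph{countable intersection} $\bigcap_i G_{f_{\beta_i}}(x_i)$. Your argument ends with a $\tfrac14$-winning strategy for a single $G_{f_\beta}(x)$, which only gives $\dim_H G_{f_{\beta_i}}(x_i)=1$ for each $i$; full Hausdorff dimension is of course not preserved under countable intersections in general. The missing (and easily supplied) step is precisely the point of routing everything through Schmidt's game: since the winning parameter $\alpha=\tfrac14$ is \emph{uniform over all} $\beta_i$ and $x_i$, Proposition \ref{intersectionproperty} gives that $\bigcap_{i=1}^\infty G_{f_{\beta_i}}(x_i)$ is itself $\tfrac14$-winning, and Proposition \ref{winninggivesdim} then yields $\dim_H=1$. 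You should state this explicitly; the uniformity of $\alpha$ is the reason the whole construction is set up the way it is.
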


\section{Conditions on the maps}\label{conditions}

\subsection{Condition $\mathbf{(i)}$}
Assume that we did not have condition $(i)$. Depending on $f$, there might be points in $[0,1)$ which do not have well-defined representations as sequences. We will be playing the $(\alpha,\beta)$ game, trying to show that our sets are $\alpha$-winning. But if no further restrictions are put on $f$ this will not be possible, as the following example illustrates. 

\begin{ex} 
We are going to construct a function $f$ such that for each $\alpha>0$ there is a $\beta>0$ for which the set of points with well-defined representations as sequences is not $(\alpha, \beta)$-winning. First divide $(0,1)$ into the intervals $[\frac{1}{2^i},\frac{1}{2^{i-1}})$ where $i\in \mathbb N$. For each $i$ consider the corresponding interval. Split the interval into $4i$ subintervals of equal size. On every second of these let $f$ be linear onto $[0,1)$. Take all of the remaining subintervals and split them into $4i$ parts and continue this procedure indefinitely. After doing this for each $i$ we have defined a function $f$ except on a set of Lebesgue measure zero. Although this set is small with respect to Lebesgue measure we get into trouble. 

For any $\alpha>0$, pick an $i \in \mathbb N$ such that $\frac{1}{i}<\alpha$. Let $\beta$ be such that $\alpha \beta = \frac{1}{4i}$. Let the player Black choose $B_0$ as the interval $[\frac{1}{2^i},\frac{1}{2^{i-1}})$. Then no matter how White chooses $W_0$, it is always possible for Black to choose $B_1$ as one of the $2i$ intervals on which $f$ was not defined until at smaller scale. The player Black can play so that this situation is repeated indefinitely. So, the points at which $f$ is well-defined is not $(\alpha,\beta)$-winning. Since $\alpha>0$ was arbitrary, this set is not $\alpha$-winning for any $\alpha>0$. So, with this $f$, we cannot use the $(\alpha,\beta)$-game.
\end{ex}

It is clear that we avoid cases like this if we impose condition $(i)$ on $f$. For a given function $f$, condition $(i)$ may not be that easy to check so we give a sufficient condition for it to be satisfied.

\begin{lem}\label{endpointscondition}
Let $f$ be a function as described in Section \ref{givingfullshift} and let $E(f)$ be the set of endpoints of generation $1$ cylinders. Let $Acc(E)$ denote the set of points of accumulation for a set $E$. If there is an $n\in \mathbb N$ such that $Acc^n(E(f))=\emptyset$, then condition $(i)$ is satisfied.
\end{lem}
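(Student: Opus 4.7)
The approach is to prove, by induction on the Cantor--Bendixson rank $n$, a more general avoidance lemma: for any closed $K\subset[0,1]$ with $Acc^n(K)=\emptyset$ there exists $\alpha_0=\alpha_0(K)>0$ such that, for any starting interval $B_0$ and any $\beta>0$, White has a strategy in the $(\alpha_0,\beta)$-game to force $W_m\cap K=\emptyset$ for some finite $m$. Applied to $K=E(f)$ this places $W_m$ inside a single generation~$1$ cylinder. For the generation~$k$ version I iterate one generation at a time: having placed $W_m$ inside a generation~$j$ cylinder $C_{x_1\dots x_j}$ with $j<k$, the endpoints of generation~$(j+1)$ cylinders lying in $C_{x_1\dots x_j}$ are precisely the preimage of $E(f)$ under the monotone bijection $f^{j}|_{C_{x_1\dots x_j}}\colon C_{x_1\dots x_j}\to[0,1)$, which is a homeomorphism by the expansion hypothesis; this preimage is therefore homeomorphic to $E(f)$, so the same $\alpha_0$ from the avoidance lemma applies and yields, in finitely many further moves, a $W_{m'}$ inside a generation~$(j+1)$ cylinder.

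For the base case $n=1$ of the avoidance lemma, $K$ is closed with no accumulation points in the compact set $[0,1]$ and is therefore finite, of some cardinality $N$. Choose $\alpha_0<1/(N+1)$, for instance $\alpha_0=1/(N+2)$. The at most $N$ points of $K\cap B_0$ split $B_0$ into at most $N+1$ subintervals whose lengths sum to $|B_0|$, so the longest has length at least $|B_0|/(N+1)>\alpha_0|B_0|$, and White places $W_0$ strictly inside it, achieving $W_0\cap K=\emptyset$ already at $m=0$.

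For the inductive step, let $K$ satisfy $Acc^{n+1}(K)=\emptyset$, so $Acc(K)$ has rank $n$. By the inductive hypothesis applied to $Acc(K)$ there is a finite-move strategy placing some $W_{m'}$ disjoint from $Acc(K)$. Since $W_{m'}$ is compact and disjoint from $Acc(K)$, the set $K\cap W_{m'}$ is closed and has no accumulation points in $W_{m'}$, hence finite with some positive minimum pairwise distance $d$ (or at most one point). White now waits: by the game rules $|B_{m'+k}|=(\alpha_0\beta)^k|B_{m'}|\to 0$, so some later $B_{m'+k^*}$ has length smaller than $d$ and contains at most one point of $K\cap W_{m'}$; White then chooses $W_{m'+k^*}$ to avoid that single point, which is possible provided $\alpha_0<1/2$, a constraint automatically enforced by the base-case value $1/(N+2)$.

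The main obstacle is that after the first phase the cardinality $|K\cap W_{m'}|$ is not bounded in advance by any quantity depending only on $K$, so the base case cannot be reapplied with a predetermined $\alpha_0$. The resolution is to exploit the forced geometric shrinkage $|B_k|\to 0$ built into the game: regardless of Black's choice of $\beta$, after enough moves every subsequent $B_k$ falls below the isolation scale $d$, reducing the task to avoiding a single point, which any $\alpha_0<1/2$ handles. The resulting $\alpha_0$ for $E(f)$ itself is essentially $1/(|Acc^{n-1}(E(f))|+2)$, uniform across the cylinder-zooming iteration by the homeomorphism argument in the first paragraph.
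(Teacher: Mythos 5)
Your proposal is correct and follows essentially the same route as the paper: an induction on the Cantor--Bendixson rank of the endpoint set, peeling off $Acc^{n-1},Acc^{n-2},\dots$ one level at a time (using compactness of the current $W_m$ to reduce each level to a finite set, then waiting until the game interval drops below the isolation scale), and transporting the whole argument to generation $k$ via the homeomorphism $f^{j}|_{C_{x_1\dots x_j}}$. The only cosmetic difference is that the paper keeps $\alpha\le\tfrac12$ throughout and also avoids the finite base-case set by waiting, whereas you shrink $\alpha_0$ to $1/(N+2)$ to do it in one move.
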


\begin{proof}
Assume that White is given an interval $I$ and wants to avoid all endpoints of generation $k$ cylinders. Since $Acc^n(E(f))$ is empty we know that $Acc^{n-1}(E(f))$ is finite. It is then easy for White to avoid this set in finitely many turns if $\alpha \leq \frac{1}{2}$. When this is done, White has placed a set $W_{j_1}$ such that it does not contain any points from $Acc^{n-1}(E(f))$. But then it can at most contain finitely many points from $Acc^{n-2}(E(f))$. Of course White can avoid these in the same way. By induction, White can avoid all points from $E(f)$ in a finite number of turns. This means that White can choose a set $W_{j_n}$ inside a generation $1$ cylinder $C_{x_1}$ after finitely many turns. Let $E(f^2)$ denote the set of endpoints of generation $2$ cylinders in $C_{x_1}$. If $n\geq 2$, White wants to avoid this set as well. But $E(f^2)$ is the inverse image of $E(f)$ under the homeomorphism $f|_{C_{x_1}}\colon C_{x_1} \mapsto [0,1)$. Thus, $E(f^2)$ has the same topological properties as $E(f)$. In particular, $Acc^n(E(f^2))=\emptyset$, so just as he avoided $E(f)$, White can avoid $E(f^2)$ in finitely many turns if $\alpha \leq \frac{1}{2}$. Repeating this argument, we get that White can avoid all endpoints of generation $k$ cylinders after a finite number of turns and place his set $W_j$ inside a generation $k$ cylinder for some finite $j$.
\end{proof}

Note that while the condition in Lemma \ref{endpointscondition} is sufficient to ensure condition $(i)$ it is by no means necessary. For example consider the middle third Cantor set. It is defined by repeatedly removing the middle third of each interval, starting with $[0,1]$. Let $f$ be the function obtained by letting $f$ be linear from $0$ to $1$ on each removed interval. Then $f$ is well-defined except on the middle third Cantor set which is a perfect set. Thus the conditions of Lemma \ref{endpointscondition} are not fulfilled but it is obvious that in the $(\alpha,\beta)$-game, White only needs one turn to avoid the middle third Cantor set if $\alpha = \frac{1}{9}$. The set of endpoints of cylinders from higher generation will only be scalings of $E(f)$ since $f$ is linear on each cylinder. Thus, White can avoid the endpoints of the cylinders of any given generation in finitely many turns. 

\subsection{Condition $\mathbf{(ii)}$}
To be able to prove Theorem \ref{fullshiftinunitinterval} we need the cylinders to shrink in some uniform way. One way to get this is of course to require uniform expansion, \mbox{i.e} that for some $\lambda>0$ it holds that $|f(x)-f(y)|\geq (1+\lambda)|x-y|$ for all $x,y$ in the same generation $1$ cylinder. We use the weaker Condition $\mbox{(ii)}$ to allow functions like $f\colon x\mapsto \frac{1}{x} \mod 1$.

\begin{lem}
The continued fraction expansion of numbers $x\in [0,1)$ which is given by the map $f\colon  x\mapsto \frac{1}{x} \mod 1$,  satisfies Condition $(ii)$.
\end{lem}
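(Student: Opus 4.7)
My approach is to use classical continued fraction theory to express the cylinder lengths in closed form and then exploit the Fibonacci lower bound on the growth of convergent denominators, which is uniform in the starting index and in the choice of partial quotients.

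First I would recall that for the Gauss map $f\colon x \mapsto 1/x \mod 1$ the cylinder $C_{a_1 \cdots a_n}$ determined by partial quotients $a_1, \ldots, a_n$ is an interval with endpoints $p_n/q_n$ and $(p_n + p_{n-1})/(q_n + q_{n-1})$, where $p_k/q_k$ are the convergents and $q_k$ satisfies $q_k = a_k q_{k-1} + q_{k-2}$ with $q_{-1} = 0$, $q_0 = 1$. Using the standard identity $p_{n-1} q_n - p_n q_{n-1} = (-1)^n$ this gives
\[
|C_{a_1 \cdots a_n}| = \frac{1}{q_n(q_n + q_{n-1})},
\]
and hence the two-sided estimate $\frac{1}{2 q_n^2} \leq |C_{a_1 \cdots a_n}| \leq \frac{1}{q_n^2}$.

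Next I would set $Q_j := q_{n+j}/q_n$ and translate the convergent recursion into $Q_{j+1} = a_{n+j+1} Q_j + Q_{j-1} \geq Q_j + Q_{j-1}$, with $Q_0 = 1$ and $Q_1 \geq 1$. Crucially this inequality uses only $a_i \geq 1$, so it is uniform in $n$ and in the tail digits $(a_{n+j})$; by induction $Q_m \geq F_m$, where $F_m$ denotes the $m$-th Fibonacci number.

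Combining the two estimates yields
\[
\frac{|C_{a_1 \cdots a_{n+m}}|}{|C_{a_1 \cdots a_n}|} \leq \frac{1/q_{n+m}^2}{1/(2 q_n^2)} = \frac{2 q_n^2}{q_{n+m}^2} \leq \frac{2}{F_m^2},
\]
so taking $g(m) := 2/F_m^2$ verifies Condition $(ii)$. The only point that deserves care is checking that the Fibonacci growth of $Q_m$ is truly independent of where we start and of the subsequent partial quotients; this is the content of the elementary observation above and is really the only thing separating this proof from a one-line calculation.
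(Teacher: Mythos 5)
Your proof is correct, but it takes a genuinely different route from the paper. The paper argues dynamically: it shows by a pointwise case analysis that $|(f^2)'(x)|\geq \tfrac{9}{4}>2$ wherever defined (if $|f'(x)|\leq \tfrac94$ then $x\geq\tfrac23$, so $f(x)\leq\tfrac12$ and $|f'(f(x))|\geq 4$), and then bounds the cylinder ratio by $\sup 1/(f^m)'$ to get $g(m)=2^{-\lfloor m/2\rfloor}$. You instead use the arithmetic of continued fractions: the exact cylinder length $|C_{a_1\cdots a_n}|=1/(q_n(q_n+q_{n-1}))$, the two-sided estimate against $1/q_n^2$, and the Fibonacci lower bound $q_{n+m}/q_n\geq F_m$ coming from $Q_{j+1}\geq Q_j+Q_{j-1}$, which is indeed uniform in $n$ and in the tail of partial quotients since it only uses $a_i\geq 1$. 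Your $g(m)=2/F_m^2$ decays like $\varphi^{-2m}$, which is actually sharper than the paper's $2^{-\lfloor m/2\rfloor}$, though any $g\to 0$ suffices for Condition $(ii)$. The trade-off is that the paper's argument is a template that transfers to other nonlinear expanding maps where some iterate is uniformly expanding, whereas yours exploits structure specific to the Gauss map. The only cosmetic issue is your Fibonacci indexing: with $Q_0=1$, $Q_1\geq 1$ the induction gives $Q_m\geq F_{m+1}$ in the standard convention $F_1=F_2=1$; this does not affect the conclusion.
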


\begin{proof}
Let $x\in (0,1)\setminus f^{-1}(0)$. Then $f^\prime (x)=-\frac{1}{x^2}$ and $|f^\prime (x)|\geq 1$. So, with $x\in (0,1)\setminus (f^{-1}(0)\cup f^{-2}(0))$ we have that if $|f^\prime (x)|\leq \frac{9}{4}$, then 
\[
|f^\prime (x)|\leq \frac{9}{4} \quad \Rightarrow \quad x \geq \frac{2}{3} \quad \Rightarrow \quad f(x) \leq \frac{1}{2} \quad \Rightarrow \quad f^\prime (f(x))\geq 4.
\]
So $|(f^2)^\prime (x)| \geq \frac{9}{4}> 2$ for all  $x\in (0,1)\setminus (f^{-1}(0)\cup f^{-2}(0))$. This implies
\[
\frac{|C_{x_1\dots x_{n+m}}|}{|C_{x_1\dots x_n}|} \leq \sup_{x \in \tilde C_{x_1\dots x_{n+m}}} \frac{1}{(f^m)^\prime (x)} \leq 2^{-\lfloor \frac{m}{2}\rfloor} =g(m)
\]
where $\tilde C_{x_1\dots x_{n+m}}$ means the interior of the cylinder $C_{x_1\dots x_{n+m}}$ and $\lfloor \frac{m}{2}\rfloor$ means the integer part of $\frac{m}{2}$. 
\end{proof}

\section{Proofs}
The idea we use to prove Theorem \ref{fullshiftinunitinterval} and Theorem \ref{SFTunitinterval} is to translate the $(\alpha,\beta)$-game into a game where the players are choosing symbols in a sequence rather than choosing intervals. By using a simple combinatorial argument we can then conclude that our sets are $\alpha$-winning.

\subsection{A game of sequence building}\label{sequencebuildingsection}
Consider the following game for two players $\tilde B$ and $\tilde W$ with two parameters $c$ and $n$. The players are building a one sided infinite sequence $y=(y_i)_{i=1}^\infty$ in a finite or countable alphabet. First $\tilde B$ chooses $y=(y_i)_{i=1}^{b_0}$, where he can choose $b_0$ as large as he likes. Then, $(y_i)_{i=b_0+1}^\infty$ is divided into blocks of $n$ symbols. 

\begin{figure}[ht!]
\begin{center}
\setlength{\unitlength}{0.1\textwidth}
\begin{picture}(6,0.4)(0,0.4)
\thicklines
\put(0,0.4){\line(1,0){3.6}}
\put(0,0.8){\line(1,0){3.6}}
\multiput(3.6,0.4)(0.2,0){4}{\line(1,0){0.1}}
\multiput(3.6,0.8)(0.2,0){4}{\line(1,0){0.1}}

\put(0,0.4){\line(0,1){0.4}}
\put(0.6,0.4){\line(0,1){0.4}}
\put(1.6,0.4){\line(0,1){0.4}}
\put(2.6,0.4){\line(0,1){0.4}}
\put(3.6,0.4){\line(0,1){0.4}}

\put(-0.5,0.5){$y:$}
\put(0.2,0){$b_0$}
\put(1,0){$n$}
\put(2,0){$n$}
\put(3,0){$n$}
\end{picture}
\end{center}
\end{figure}

The game is carried out in one block at a time, so we start in the first block. Consider a list of all possible words of length $n$. This might be infinite depending on whether or not the alphabet is finite. The player $\tilde B$ chooses two disjoint subsets of this list and lets $\tilde W$ pick any one of these two. After $\tilde W$ has made his choice, we have a new list of remaining words. Then $\tilde B$ chooses two disjoint subsets of this list and $\tilde W$ chooses one of these. The players continue like this and the game requires that $\tilde B$ plays so that regardless of how $\tilde W$ plays, this process ends after a finite number of turns, \mbox{i.e.}, that sooner or later only one word remains. This word is then put as $(y_i)_{i=b_0+1}^{b_0+n}$. The same procedure is carried out in each block and we get the sequence $y=(y_i)_{i=1}^\infty$. The game requires that $\tilde W$ gets to play at least $cn$ times in each block regardless of how he plays. This puts restrictions on how $\tilde B$ can construct his subsets. For example, at the first turn in a block, $\tilde B$ cannot choose one of his two subsets to consist of only one word.

\begin{prop}\label{digitgame}
Given any sequence $x=(x_i)_{i=1}^\infty$ and any $c>0$, there is a block size $n$ such that no matter how $\tilde B$ plays in the sequence building game, $\tilde W$ can make sure that there is a number $N$ such that $(x_i)_{i=1}^N \neq (y_i)_{i=k}^{k+N-1}$ for all $k \in \mathbb N$.
\end{prop}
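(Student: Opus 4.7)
The plan is to exploit the following veto power of $\tilde W$: at any of his turns, since $\tilde B$'s two offered subsets $A_1, A_2$ are disjoint, any pre-specified word $w$ lies in at most one of them, so $\tilde W$ can pick the subset not containing $w$ and thereby guarantee that $w$ is excluded from all subsequent lists and hence from the eventual singleton. Iterating this over the $cn$ turns per block, $\tilde W$ can pre-commit to a list of $cn$ specific $n$-words that will not occur as the block's content.

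Given this, I would choose the block size $n$ large relative to $1/c$ and then set $N$ to be an appropriate multiple of $n$, also taken larger than $b_0$ so that no potential match can hide entirely in the initial segment $y_1,\ldots,y_{b_0}$. The strategy for $\tilde W$ is to veto, in each block $j$, a set $V_j$ of $cn$ translates $x_i\ldots x_{i+n-1}$ of the given sequence, with the shifts $i$ coordinated across blocks.

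The argument then runs as follows. Suppose for contradiction that $(y_i)_{i=k}^{k+N-1}=(x_i)_{i=1}^N$ for some $k\geq b_0+1$. For $N$ sufficiently large compared to $n$, this match contains at least one complete block of $y$ as an interior segment, and that block's $n$-word is forced by the match to equal $x_i\ldots x_{i+n-1}$ for a specific shift $i$ determined by the alignment of the match with the block partition. As the match spans successive blocks, the forced shifts form an arithmetic progression with common difference $n$ and fixed residue $j_1+1\in\{1,\ldots,n\}$ determined by $k\bmod n$. The task is then to design the $V_j$ so that every such ``match-line'' through the (block, shift) grid meets some $V_j$ within the match's span, which then yields the desired contradiction.

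The main obstacle is exactly this combinatorial design of the vetoes. It amounts to a covering problem on the $\mathbb N\times\mathbb N$ grid of (block, shift) pairs, where each putative match is a slope-$n$ line and one has a budget of $cn$ marks per column (block). Making the covering work for arbitrarily small $c>0$ forces $n$ (and hence $N$) to be taken large relative to $1/c$, so that each match spans many blocks and the $cn$ vetoes per block can be distributed among residue classes and shift magnitudes to hit every line; checking that a concrete rotation or arithmetic-progression scheme in fact does the job is the technical heart of the argument.
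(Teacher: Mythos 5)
Your proposal has a genuine gap, and the covering problem you defer to as ``the technical heart'' is in fact infeasible as posed. You extract from each of $\tilde W$'s turns only the power to exclude one pre-specified word, so each block excludes a set $V_j$ of at most $cn$ words. Now count. A match starting at position $k$ forces into each complete block $j$ inside its span the word $x_p\cdots x_{p+n-1}$ with $p=b_0+(j-1)n+2-k$; if the length-$n$ subwords of $x$ are pairwise distinct (e.g.\ $x_i=i$ over the countable alphabet, which the game allows), each excluded word in a block rules out exactly one value of $k$. The alignments $k\in\{1,\dots,K\}$ all have their spans inside roughly the first $(K+N)/n$ blocks, whose total budget is about $c(K+N)$ exclusions, while $K$ distinct exclusions are needed; for $c<1$ this fails once $K$ is large, so some alignment is never covered, no matter how the shifts are coordinated across blocks. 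Moreover this is not merely a bookkeeping failure: against your strategy (``choose the offered subset avoiding the current target word''), $\tilde B$ can steer each block to any non-excluded word by always placing it in the subset your rule selects, so he can actually realize an uncovered alignment. Since the proposition must hold for every $c>0$, the approach as described cannot be completed.

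Two ideas are missing, and together they are the paper's proof. First, the veto power is exponential, not linear: since the two offered subsets are disjoint, one of them contains at most half of whatever dangerous words remain, so $cn$ turns let $\tilde W$ exclude any pre-specified set of fewer than $2^{cn}$ words from a block, not just $cn$ of them. Second, the per-alignment bookkeeping collapses into a single finite list independent of $k$: take $N=b_0+2n$ and note that any occurrence of $(x_i)_{i=1}^{b_0+2n}$ in $y$ places the suffix $(x_i)_{i=b_0+1}^{b_0+2n}$ entirely inside the blocked region, where a window of length $2n$ must contain a complete $n$-block; that block's content is then one of the $n+1$ length-$n$ subwords of $(x_i)_{i=b_0+1}^{b_0+2n}$. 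So it suffices to exclude these same $n+1$ ``dangerous'' words from every block, which the halving argument achieves once $2^{cn}>n+1$. Without the first ingredient your budget is too small; without the second your dangerous-word list per block grows with $b_0$ (chosen by $\tilde B$ after $n$ is fixed) and with $k$, and the argument does not close.
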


\begin{proof}
Assume that $\tilde B$ chooses the symbols $(y_i)_{i=1}^{b_0}$ and consider $(x_i)_{i=1}^{b_0+2n}$. If we want $y=(y_i)_{i=1}^\infty$ to be such that $(x_i)_{i=1}^{b_0+2n}$ does not occur anywhere in $y$, then it is enough to make sure that none of the $n$-blocks in $y$ occur in $(x_i)_{i=b_0+1}^{b_0+2n}$. 

\begin{figure}[ht!]
\begin{center}
\setlength{\unitlength}{0.1\textwidth}
\begin{picture}(6,1.8)(0,0)
\thicklines

\put(1.3,1.4){\line(1,0){2}}
\put(1.3,1.8){\line(1,0){2}}

\put(1.3,1.4){\line(0,1){0.4}}
\put(3.3,1.4){\line(0,1){0.4}}

\put(-0.5,1.5){$(x_i)_{i=b_0+1}^{b_0+2n}$}

\multiput(1.6,0.8)(0,0.18){6}{\line(0,1){0.09}}
\multiput(2.6,0.8)(0,0.18){6}{\line(0,1){0.09}}

\put(0,0.4){\line(1,0){3.6}}
\put(0,0.8){\line(1,0){3.6}}
\multiput(3.6,0.4)(0.2,0){4}{\line(1,0){0.1}}
\multiput(3.6,0.8)(0.2,0){4}{\line(1,0){0.1}}

\put(0,0.4){\line(0,1){0.4}}
\put(0.6,0.4){\line(0,1){0.4}}
\put(1.6,0.4){\line(0,1){0.4}}
\put(2.6,0.4){\line(0,1){0.4}}
\put(3.6,0.4){\line(0,1){0.4}}

\put(-0.5,0.5){$y$}
\put(0.2,0){$b_0$}
\put(1,0){$n$}
\put(2,0){$n$}
\put(3,0){$n$}
\end{picture}
\end{center}
\end{figure}

There are at most $n+1$ different words of length $n$ in $(x_i)_{i=b_0+1}^{b_0+2n}$ and it is sufficient for $\tilde W$ to avoid all these in each $n$-block. We will refer to the words that we want to avoid as dangerous words. In each $n$-block $\tilde W$ gets to make at least $cn$ choices between disjoint collections of words and thereby he can avoid many of the dangerous words. Indeed, the first time $\tilde W$ plays in a block he considers the two disjoint lists of words he is given by $\tilde B$. Since they are disjoint, at least one of the lists contains half or less of the dangerous words. By choosing this list, $\tilde W$ has avoided at least half of the dangerous words in just one play. The next time $\tilde W$ plays he is given two new disjoint lists of words to choose between. Remember that only at most half of the dangerous words are left among these, so $\tilde W$ can avoid at least half of the remaining dangerous words, leaving only at most $\frac{1}{4}$ of the original dangerous words after his second play. Continuing like this, if $2^{cn}>n+1$ he can avoid all the dangerous words in the $cn$ turns he has at each block. Since $c$ is fixed we can always find large enough $n$ such that this is true. It follows that if $\tilde W$ plays according to this strategy we have that $(x_i)_{i=b_0+1}^{b_0+2n} \neq (y_i)_{i=k}^{k+2n-1}$ for any $k\geq b_0+1$, so $(x_i)_{i=1}^{b_0+2n} \neq (y_i)_{i=k}^{k+b_0+2n-1}$ for any $k\geq 1$. Thus, $N=b_0+2n$ will do the job.
\end{proof}

\subsection{Proof of Theorem \ref{fullshiftinunitinterval}} 

The idea of this proof is to create a strategy for White in the $(\alpha,\beta)$-game so that White can play the role of $\tilde W$ in the sequence building game of Section \ref{sequencebuildingsection}. We can then use Proposition \ref{digitgame} to finish the proof. It might take several turns by White to be able to do what $\tilde W$ is supposed to do in one play. Each turn by $\tilde W$ will be divided into two phases consisting of turns by White. In the first phase, the task is to choose between disjoint collections of cylinders of some generation $k_i+k$. In the next phase, the task is to make sure that the game continues inside only one cylinder of generation $k_i+k$. This is to make sure that when we start over with phase one, the cylinders we are choosing between, all have the same coding up to the position $k_i+k$. Then choosing between disjoint collections of cylinders of generation $k_{i+1}+k$ is in fact the same thing as choosing between disjoint collections of codings of positions $k_i+k+1, \dots ,k_{i+1}+k$.

\vspace{0.3 cm}

\noindent The $(\alpha,\beta)$-game starts when the player Black chooses his interval $B_0\subset [0,1]$. 

\vspace{0.3 cm}

\noindent $\mathbf{Phase 1:}$ Let $k_0$ be the largest generation for which there is a cylinder $C_{x_1 \dots x_{k_0}}$ intersecting $B_0$ such that $|C_{x_1 \dots x_{k_0}}|\geq |B_0|$. It might for example be that $k_0=0$, so that $C_{x_1 \dots x_{k_0}}=[0,1)$. By the maximality of $k_0$, all generation $k_0+1$ cylinders intersecting $B_0$ are smaller than $|B_0|$. By condition $(i)$ we know that all cylinders of generation $k_0+k$ intersecting $B_0$ are smaller than $g(k-1)|B_0|$. Let $k$ be a number such that $g(k-1)< \frac{1}{4}$. This is possible since $g(n)\to 0$ as $n\to \infty$, and it implies that the largest cylinder of generation $k_0+k$ intersecting $B_0$ is smaller than $\frac{|B_0|}{4}$.

Let $C'$ be the generation $k_0+k$ cylinder containing the center point of $B_0$. It follows that $B_0 \setminus C'$ consists of two intervals, each of length larger than $\frac{|B_0|}{4}$. Each of these intervals intersects a family of generation $k_0+k$ cylinders and these two families are disjoint. Each family of generation $k_0+k$ cylinders corresponds to a family of codings of positions $1,\dots,k_0+k$. Recall that in the $(\alpha,\beta)$-game, after Black chooses $B_0$, the other player, White, chooses a ball $W_0\subset B_0$ such that $|W_0|=\alpha|B_0|$. So, with $\alpha \leq \frac{1}{4}$, White can choose between two disjoint collections of codings at positions $1, \dots, k_0+k$ by placing $W_0$ to the left or right of $C'$.

\begin{figure}[ht!]
\begin{center}
\setlength{\unitlength}{0.1\textwidth}
\begin{picture}(6,1)(0,0.2)
\thicklines
\put(0,0.4){\line(1,0){6}}
\put(0,0.8){\line(1,0){6}}

\put(0,0.4){\line(0,1){0.4}}
\put(2.3,0.4){\line(0,1){0.4}}
\put(3.5,0.4){\line(0,1){0.4}}
\put(6,0.4){\line(0,1){0.4}}

\put(0,0.9){$\overbrace{\rule{7.2cm}{0cm}}$}
\put(2.9,1.2){$B_0$}
\put(2.8,0){$C'$}

\end{picture}
\end{center}
\end{figure}

\vspace{0.3 cm}

\noindent $\mathbf{Phase 2:}$ After this is done Black will choose an interval $B_1\subset W_0$ and it is up to White to place $W_1$ inside it. We want White to place $W_1$ inside a cylinder of generation $k_0+k$. It might happen that these generation $k_0+k$-cylinders are so small that White cannot do this right away. But by condition $(ii)$, we know that with $\alpha\leq \alpha_0$ then for every $\beta>0$ there is a strategy for the $(\alpha,\beta)$-game that White can use to place his set inside a generation $k_0+k$ cylinder after a finite number of turns, no matter how Black plays.

\begin{figure}[ht!]
\begin{center}
\setlength{\unitlength}{0.1\textwidth}
\begin{picture}(6,1)(0,0.2)
\thicklines
\put(0,0.4){\line(1,0){6}}
\put(0,0.8){\line(1,0){6}}

\put(0,0.4){\line(0,1){0.4}}
\put(0.4,0.4){\line(0,1){0.4}}
\put(0.9,0.4){\line(0,1){0.4}}
\put(1.2,0.4){\line(0,1){0.4}}
\put(1.5,0.4){\line(0,1){0.4}}
\put(2.3,0.4){\line(0,1){0.4}}
\put(3.1,0.4){\line(0,1){0.4}}
\put(3.3,0.4){\line(0,1){0.4}}
\put(3.6,0.4){\line(0,1){0.4}}
\put(4,0.4){\line(0,1){0.4}}
\put(4.5,0.4){\line(0,1){0.4}}
\put(5.1,0.4){\line(0,1){0.4}}
\put(6,0.4){\line(0,1){0.4}}

\put(0,0.9){$\overbrace{\rule{7.2cm}{0cm}}$}
\put(2.9,1.2){$B_1$}
\put(1.3,0){generation $k_0+k$-cylinders}

\end{picture}
\end{center}
\end{figure}

If White can place his set $W_1$ inside a cylinder $C_{x_1 \dots x_{k_0+k}}$, then he does. If he cannot, then he uses the following strategy. 

First he places $W_1$ so that it only intersects cylinders $C_{x_1 \dots x_{k_0+k}}$ that are contained in $B_1$. This is possible since if there are subsets of cylinders $C_{x_1 \dots x_{k_0+k}}$ in $B_1$, then these parts together cannot constitute more than $2\alpha |B_1|$, otherwise White would have chosen $W_1$ inside one of them. At his next play, if he can place $W_2$ inside a cylinder $C_{x_1 \dots x_{k_0+k}}$ he does. Otherwise White chooses his set $W_2$ according to a $(\alpha,\alpha \beta^2)$-game strategy that allows him to avoid endpoints of generation $k_0+k$ cylinders after finitely many turns. At his next turn if White could not fit $W_3$ inside a cylinder $C_{x_1 \dots x_{k_0+k}}$ he plays $W_3$ so that he avoids generation $k_0+k$ cylinders that are not contained in $B_3$. As long as he cannot place his set inside a cylinder $C_{x_1 \dots x_{k_0+k}}$ White continues like this, every second turn playing to avoid endpoints of generation $k$ cylinders and the rest of the turns playing to avoid cylinders not contained in the set chosen but Black. Then sooner or later White will be able to place his set inside a cylinder $C_{x_1 \dots x_{k_0+k}}$ and he stops.

Let $j_0$ be the number of the turn at which White could play so that his set $W_j \subset C_{x_1 \dots x_{k_0+k}}$. 
If White needed more than one turn to accomplish this, it means that $C_{x_1 \dots x_{k_0+k}}\subset B_{j_0-2}$. Indeed, at every second play, White makes sure that all cylinders $C_{x_1 \dots x_{k_0+k}}$ that are not fully contained in the set chosen by Black are avoided. We conclude that in this case we have $|C_{x_1 \dots x_{k_0+k}}|\leq |B_{j_0-2}|$.  

\vspace{0.3 cm}

\noindent Now, White has used the turns $0,1,2,\dots ,j_0$ to make the first turn by $\tilde W$ in the sequence building game by choosing between disjoint collections of codings of positions $1, \dots, k_0+k$. He also uses these turns to make sure that the coding of positions $1, \dots, k_0+k$ is fixed after turn number $j_0$. Later on, this fact will allow White to to create the next turn by $\tilde W$. 

After turn $j_0$ by White, Black will choose an interval $B_{j_0+1}\subset W_{j_0}$ and we start creating the next turn by $\tilde W$ in the sequence building game.

\vspace{0.3 cm}

\noindent $\mathbf{Phase 1:}$ Let $k_1$ be the largest generation for which there is a cylinder $C_{x_1 \dots x_{k_1}}$ intersecting $B_{j_0+1}$ such that $|C_{x_1 \dots x_{k_1}}|\geq |B_{j_0+1}|$. Repeating what we did after finding $k_0$, we get that with $\alpha \leq \frac{1}{4}$, White can choose between two disjoint collections of generation $k_1+k$ cylinders. We know that all of these are in the same generation $k_0+k$ cylinder, so White can choose between two disjoint collections of codings at positions $k_0+k+1, \dots, k_1+k$. 

\vspace{0.3 cm}

\noindent $\mathbf{Phase 2:}$ We can then continue as before with $\alpha \leq \alpha_0$, finding a minimal $j_1$ such that $W_{j_1}$ can be placed in a cylinder $|C_{x_1 \dots x_{k_1+k}}|$. Again, if it took more than one turn by White to do this we have $|C_{x_1 \dots x_{k_1+k}}|\leq |B_{j_1-2}|$. 

\vspace{0.3 cm}

\noindent Now, White has used the turns $j_0+1,\dots, j_1$ to make the second turn by $\tilde W$ in the sequence building game by choosing between disjoint collections of codings of positions $k_0+k+1, \dots, k_1+k$ and prepared so that he will be able to make the next turn by $\tilde W$ later on. 

We can continue repeating this procedure for each $i\geq 0$ constructing a turn by $\tilde W$ in which $\tilde W$ gets to choose between disjoint collections of codings at positions $k_i+k+1, \dots, k_{i+1}+k$.

\begin{figure}[ht!]
\begin{center}
\setlength{\unitlength}{0.1\textwidth}
\begin{picture}(6,1)(1,0.2)
\thicklines
\put(0,0.4){\line(1,0){6}}
\put(0,0.8){\line(1,0){6}}

\put(0,0.4){\line(0,1){0.4}}

\put(6,0.4){\line(0,1){0.4}}

\multiput(0,0.4)(0.2,0){30}{\line(0,1){0.4}}

\put(-0.45,0.53){$y:$}
\put(0.2,-0.1){$k_0$}
\put(0.3,0.2){\line(0,1){0.25}}
\put(1,-0.1){$k_1$}
\put(1.1,0.2){\line(0,1){0.25}}
\put(1.9,-0.1){$k_2$}
\put(1.9,0.2){\line(0,1){0.25}}
\put(4.3,-0.1){$k_3$}
\put(4.3,0.2){\line(0,1){0.25}}
\put(5.5,-0.1){$k_4$}
\put(5.5,0.2){\line(0,1){0.25}}

\multiput(6,0.4)(0.2,0){4}{\line(1,0){0.1}}
\multiput(6,0.8)(0.2,0){4}{\line(1,0){0.1}}
\end{picture}
\end{center}
\end{figure}

Next we will show that $k_{i+1}-k_i$ is bounded. We begin by recalling that we had a function $g$ that gave us a speed at which cylinders shrunk in size as the generation increased. We used this function to find a constant $k$ such that when we increased the generation by $k$ the size shrunk by at least a factor $4$. Since the number $k$ originates from potentially very crude estimates it tells us nothing about the size of $k_{i+1}-k_i$. In some cases, it might well happen that $k_{i+1}-k_i=1$ while for example $k=10$. When looking for a uniform bound on $k_{i+1}-k_i$ it will be convenient to consider only the case $k_{i+1}-k_i>k$. Since we are looking for an upper bound, the case $k_{i+1}-k_i\leq k$ is uninteresting.

We start at phase $1$ when constructing turn number $i$ for $\tilde W$ in the sequence building game. First Black plays by choosing a set $B_1$, then White chooses between two disjoint collections of generation $k_i+k$ cylinders. Then phase $2$ starts as Black plays again. Assume now that White is able to place his set inside a generation $k_i+k$ cylinder at his first turn in phase $2$. This ends phase $2$ and means the end of turn number $i$ for $\tilde W$ in the sequence building game.

After this, it is time to construct turn number $i+1$ by $\tilde W$. Black starts phase $1$ by choosing a set $B_2$. Then we find the maximal generation $k_{i+1}$ such that $B_2$ intersects a cylinder $C_{x_1 \dots x_{k_i+1}}$ such that $|C_{x_1 \dots x_{k_i+1}}|\geq |B_2|$. Since $|B_2|=(\alpha\beta)^2 |B_1|$ we get
\begin{align*}
|B_{2}| \leq &|C_{x_1\dots x_{k_{i+1}}}| \leq g(k_{i+1}-k_i-k)|C_{k_1 \dots x_{k_i+k}}|\\
< &g(k_{i+1}-k_i-k)|B_1|=\frac{g(k_{i+1}-k_i-k)|B_2|}{(\alpha \beta )^2}
\end{align*}
so $g(k_{i+1}-k_i-k) <(\alpha \beta)^2$. Since $g(n)\to 0$ as $n\to \infty$ this puts a bound on $k_{i+1}-k_i$.

Assume instead that when constructing turn number $i$ for $\tilde W$ in the sequence building game, White needed more than one turn in phase $2$, to place his set inside a generation $k_i+k$ cylinder. We recall that if $W_j$ is the last set chosen by White in this phase, then $|C_{x_1 \dots x_{k_i}}|< |B_{j-2}|$. After this, it is time to construct turn number $i+1$ by $\tilde W$. Black starts phase $1$ by choosing a set $B_{j+1}$. Then we find the maximal generation $k_{i+1}$ such that $B_{j+1}$ intersects a cylinder $C_{x_1 \dots x_{k_{i+1}}}$ such that $|C_{x_1 \dots x_{k_{i+1}}}|\geq |B_{j+1}|$. We get
\begin{align*}
|B_{j+1}| \leq &|C_{x_1\dots x_{k_{i+1}}}| \leq g(k_{i+1}-k_i-k) |C_{x_1 \dots x_{k_{i}+k}}| \\
< & g(k_{i+1}-k_i-k)|B_{j-2}|=\frac{g(k_{i+1}-k_i-k)|B_{j+1}|}{(\alpha \beta)^3},
\end{align*}
so $g(k_{i+1}-k_i-k) <(\alpha \beta)^3$. Since $g(n)\to 0$ as $n\to \infty$ this puts a bound on $k_{i+1}-k_i$.

What we have proven this far is that if we choose $\alpha\leq \min\{\alpha_0,\frac{1}{4}\}$, then the sequence $k_i$ has a maximal distance between its elements. This implies that if the block size $n$ is large enough, then in the following picture

\begin{figure}[ht!]
\begin{center}
\setlength{\unitlength}{0.1\textwidth}
\begin{picture}(6,1)(1,0)
\thicklines
\put(0,0.4){\line(1,0){3.6}}
\put(0,0.8){\line(1,0){3.6}}
\multiput(3.6,0.4)(0.2,0){4}{\line(1,0){0.1}}
\multiput(3.6,0.8)(0.2,0){4}{\line(1,0){0.1}}

\put(0,0.4){\line(0,1){0.4}}
\put(0.6,0.4){\line(0,1){0.4}}
\put(1.6,0.4){\line(0,1){0.4}}
\put(2.6,0.4){\line(0,1){0.4}}
\put(3.6,0.4){\line(0,1){0.4}}

\put(-0.5,0.5){$y:$}
\put(0.2,0){$b_0$}
\put(1,0){$n$}
\put(2,0){$n$}
\put(3,0){$n$}

\end{picture}
\end{center}
\end{figure}

\noindent there is at least one $k_i$ in each $n$-block. Increasing $n$ we can clearly make sure that there are at least $cn$ different $k_i$ in each $n$-block for some $c>0$. This implies that if we play the $(\alpha,\beta)$-game in $[0,1]$ with $\alpha\leq \min\{\alpha_0, \frac{1}{4}\}$, then White can use a strategy that transforms the game into the sequence building game. By Proposition \ref{digitgame} the player $\tilde W$ can make sure that we get a number in $\Big\{\, z\in [0,1): x\notin \overline{\cup_{n=1}^\infty f^n(z)}\, \,\Big\}$ for any given $x\in [0,1)$ with well-defined expansion, by choosing the block size $n$ in the sequence building game. Since this can be done for any $\alpha\leq \min\{\alpha_0, \frac{1}{4}\}$ and any $\beta>0$ we conclude that $\Big\{\, z\in [0,1): x\notin \overline{\cup_{n=1}^\infty f^n(z)}\, \, \Big\}$ is $\alpha$-winning for all $x\in [0,1)$ with well-defined expansion, if $\alpha \leq \min\{\alpha_0, \frac{1}{4}\}$. This proves the theorem.

\subsection{Proof of Theorem \ref{SFTunitinterval}}

Since the symbol $\beta$ is already used in the $(\alpha,\beta)$-game we will use $b$ instead of $\beta$ to denote the base in the $\beta$-shift.

The method used to prove Theorem \ref{fullshiftinunitinterval} works in this case as well, but now we do not have to worry about countable alphabets and points without well-defined expansions. Since $S_b$ is of finite type there is a constant $C_b$ such that 
\[
C_b^{-1}<\frac{C_{x_0\dots x_{n-1}}}{b^{n}}<C_b
\]
for all $n$ and all $(x_i)_{i=0}^\infty \in S_b$. This implies that 
\[
\frac{|C_{x_0\dots x_{n+m}}|}{|C_{x_0\dots x_{n}}|} \leq \frac{C_b^2}{b^m}
\]
for all $m,n$ and all $(x_i)_{i=0}^\infty \in S_b$. Thus we can let $\frac{C_b^2}{b^m}$ play the role of $g(m)$ from the proof of Theorem \ref{fullshiftinunitinterval}.

We will now briefly describe how White plays in the $(\alpha,\beta)$-game to construct turn number $i$ in the sequence building game. It all begins as usual with the player Black choosing a set $B_i$. 

\vspace{0.3 cm}

\noindent $\mathbf{Phase 1:}$ We do as in proof of Theorem \ref{fullshiftinunitinterval}. We find a minimal $k_i$. Then we choose $k$ large enough so that by placing $W_i$ White can choose between two disjoint collections of generation $k_i+k$ cylinders.  For example, $k\geq 1+\frac{4C_b^2}{\log b}$ will be enough. 

\vspace{0.3 cm}

\noindent $\mathbf{Phase 2:}$ We do as in the proof of Theorem \ref{fullshiftinunitinterval}. We let White alternate between avoiding cylinders not contained in the sets chosen by Black and avoiding endpoints of generation $k_i+k$ cylinders until White can place his set in a generation $k_i+k$ cylinder.

\vspace{0.3 cm}

\noindent Just as in the proof of Theorem \ref{fullshiftinunitinterval} we conclude that with this sequence of turns, White is able to choose between disjoint collections of codings of positions $k_{i-1}+k+1, \dots k_i+k$. We then do as in proof of Theorem \ref{fullshiftinunitinterval} to show that $k_{i+1}-k_i$ is bounded. We then apply Proposition \ref{digitgame} to conclude that $\Big\{\, z\in [0,1): x\notin \overline{\cup_{n=1}^\infty f_b^n(z)}\, \, \Big\}$ is $\alpha$-winning for all $x\in [0,1]$ and all $\alpha \leq \frac{1}{4}$. This proves the theorem.

\subsection{A note on the {$\boldsymbol{(\alpha,\beta)}$-game}}
We note that in the proofs of Theorem \ref{fullshiftinunitinterval} and Theorem \ref{SFTunitinterval}, the strategies we describe for White use the fact that Black can not zoom in more than a fixed factor $\gamma$ at each turn in each given game. It would not matter at all for the strategies if Black was allowed at each turn to choose $\gamma \in [\gamma_0,1] $ for some fixed $\gamma_0$. If we also allow White to choose $\alpha \in [\alpha_0,1] $, White can still use the same strategy. This leads us to consider the following modification of the $(\alpha,\beta)$-game.

Let $\alpha_0,\gamma_0 \in (0,1)$ be fixed.
    \begin{itemize}
      \item{\em In the initial step} Black chooses any closed interval $B_0$, and then White chooses an $\alpha \in [\alpha_0, 1]$ and a closed interval $W_0 \subset B_0$ such that $|W_0| = \alpha |B_0|$.
      \item{\em Then the following step is repeated.} At step $k$ Black choses $\gamma\in [\gamma_0,1]$ and a closed interval $B_{k} \subset W_{k-1}$ such that $|B_{k}| = \gamma |W_{k-1}|$. Then White chooses a new $\alpha \in [\alpha_0, 1]$ and a closed interval $W_{k} \subset B_k$ such that $|W_k| = \alpha |B_k|$.
    \end{itemize}

The following observation now follows.
\begin{rem}
In Theorems \ref{fullshiftinunitinterval} and \ref{SFTunitinterval} with corollaries, the $(\alpha,\beta)$-game can be replaced by the modified $(\alpha,\beta)$-game described in this section.
\end{rem}

\end{document}